\newcommand{\DL}{\mathcal{DL}}
\newcommand{\cochain}[2]{C^{#1}_{\DL}(#2)}
\DeclareMathOperator{\Zinb}{{\mathrm{DL}}}
\newtheorem{theorem}{Theorem}[section]
\newtheorem{lemma}[theorem]{Lemma}
\newtheorem{corollary}[theorem]{Corollary}
\newtheorem{definition}[theorem]{Definition}
\newtheorem{example}[theorem]{Example}
\newtheorem{remark}[theorem]{Remark}
\title{A New Approach to Defining Cochain Complexes for dual Leibniz algebra}
\author{H. Alhussein$^{1),2)}$}
\address{$^{1)}$Siberian State University of Telecommunication and Informatics, Novosibirsk, Russia.}
\address{$^{2)}$Novosibirsk State University of Economics and Management, Novosibirsk,  Russia. \\\href{mailto:k.alkhussein@g.nsu.ru}{k.alkhussein@g.nsu.ru}}
\subjclass[2020]{18G35, 17A30, 17B56, 18N50, UDC: 512.554}
\keywords{Leibniz algebra, dual Lebniz algebra, Cohomology, Cochain complex, Lie complex}
\begin{document}

\begin{abstract}
  We construct a cochain map embedding the cohomology complex of any dual Leibniz algebra $B$ into the Lie algebra cochain complex of $\mathfrak{g} \otimes B$, where $\mathfrak{g}$ is a Leibniz algebra. This reduces the study of dual Leibniz cohomology to classical Lie algebra cohomology, yielding computational simplifications and new structural insights.
\end{abstract}
\maketitle

\tableofcontents

\section{Introdction}

Cohomology theory plays a fundamental role in understanding algebraic structures, their deformations, extensions, and classification problems. For Lie algebras, the well-established Chevalley-Eilenberg cohomology \cite{chevalley-eilenberg} has been instrumental in numerous applications across mathematics and theoretical physics. In parallel, Leibniz algebras \cite{Loday}—a non-antisymmetric generalization of Lie algebras—have gained significant attention, with their cohomology theory developed by Loday and Pirashvili \cite{LodayPirashvili}.

Dual Leibniz algebras (also known as Zinbiel or pre-commutative algebras) \cite{Chapoton2001,Loday} represent the Koszul dual notion to Leibniz algebras in the sense of operadic duality. These structures appear naturally in various contexts, including combinatorial mathematics, deformation theory, and the study of integrable systems. Despite their theoretical importance, the cohomology theory for dual Leibniz algebras remains less explored and computationally challenging due to the complexity of the original cochain complex definition.

The primary difficulty lies in the intricate nature of the differential in the dual Leibniz cochain complex, which involves sums over specific shuffle permutations with alternating signs—a structure that makes explicit computations cumbersome and obscures the relationship with more classical cohomology theories. This complexity has limited the practical application of dual Leibniz cohomology in concrete algebraic and geometric problems.

Recent developments in the interaction between Leibniz and dual Leibniz algebras \cite{GubarevKolesnikov} have revealed deep structural connections. Notably, the tensor product of a Leibniz algebra with a dual Leibniz algebra yields a Lie algebra (Theorem \ref{4.1}), suggesting a pathway to relate their cohomology theories. This observation motivates a new approach to studying dual Leibniz cohomology through the lens of classical Lie algebra cohomology.

In this article, we introduce a novel method for defining and computing cohomology for dual Leibniz algebras by embedding their cochain complex into the Chevalley-Eilenberg complex of an associated Lie algebra. Our main construction (Theorem \ref{5.4}) establishes a cochain map:
\[
\Psi : C^*_{\DL}(B, M) \rightarrow C^*_{\mathrm{Lie}}(\mathfrak{g} \otimes B, \mathfrak{g} \otimes M)
\]
where $\mathfrak{g}$ is a Leibniz algebra, $B$ is a dual Leibniz algebra, and $M$ is a $B$-bimodule. This embedding transforms the study of dual Leibniz cohomology into a problem within the well-understood framework of Lie algebra cohomology.

Our approach offers several advantages:
\begin{enumerate}[label=(\roman*)]
\item It provides a systematic and computationally tractable method for studying dual Leibniz cohomology.
\item It reveals structural relationships between deformations of dual Leibniz algebras and associated Lie algebras.
\item It enables the application of established techniques from Lie algebra cohomology to problems involving dual Leibniz structures.
\item It yields long exact sequences (Corollary \ref{4.6}) connecting different cohomology theories.
\end{enumerate}

The article is organized as follows. In Section 2, we recall basic definitions and properties of Leibniz and dual Leibniz algebras. Section 3 reviews the classical cohomology theories for both structures, highlighting the complexity of the original dual Leibniz differential. Section 4 establishes the fundamental tensor product constructions that relate these algebras. Our main results appear in Section 5, where we construct the cochain map $\Psi$, prove it commutes with differentials, and derive its consequences, including the injectivity when $\mathfrak{g}$ is free and the associated long exact sequence in cohomology. We conclude with explicit computations (Example \ref{exm:4.6}) demonstrating the practical utility of our approach.

Throughout, we work over a field $\Bbbk$ of characteristic zero, though many results extend to arbitrary characteristic with appropriate modifications. Our notation generally follows \cite{Loday} for operadic aspects and \cite{Loday, GubarevKolesnikov} for tensor product constructions.

\section{Leibniz algebra and dual Leibniz  algebras}

\begin{definition}[\cite{Chapoton2001}]
A Leibniz algebra $\mathfrak{g}$ is a $k$-vector space equipped with a bilinear map
\[
[-, -]: \mathfrak{g} \times \mathfrak{g} \to \mathfrak{g}
\]
satisfying the (right) Leibniz identity
\begin{equation} \tag{1.1}
[x, [y, z]] = [[x, y], z] - [[x, z], y], \quad \forall x, y, z \in \mathfrak{g}.
\end{equation}

Lie algebras are examples of Leibniz algebras since the Leibniz identity is equivalent to the Jacobi identity when the bracket is skew-symmetric.
A representation of $\mathfrak{g}$ is an $\mathfrak{g}$-bimodule $M$ 
satisfying 
\[
\begin{aligned}
    [x, [y, m]] &= [[x, y], m] - [[x, m], y],\\
    [x, [m, z]] &= [[x, m], z] - [[x, z], m],\\ [m, [y, z]] &= [[m, y], z] - [[m, z], y]
    \end{aligned}
    \]
for all $m\in M$, $x,y,z\in \mathfrak{g}$.
\end{definition}
\begin{lemma}[\cite{I.S}]\label{lem}
  Let \(\mathfrak{g}\) be a Leibniz algebra and \(x, x_1, \dots, x_n \in \mathfrak{x}\). Then

\[
[x, [x_1, \dots, x_n]] = \sum_{i=0}^{n-1} \sum_{(\alpha,\beta) \in \mathsf{Sh}^1(n-i,i)} (-1)^i [x, x_{\beta(i)}, \dots, x_{\beta(1)}, x_{\alpha(1)}, \dots, x_{\alpha(n-i)}],
\]

Here \(\mathsf{Sh}^1(s,t)\) means \((s,t)\)-shuffles  and the left normed Leibniz  bracket of elements $x_1,\cdots,x_n$ of a Leibniz algebra  \(\mathfrak{g}\) is defined by recursion
\[
[x_1,\cdots,x_n] := [[x_1,\cdots,x_{n-1}], x_n],\quad where\quad [x_1] =x_1.
\]
  
\end{lemma}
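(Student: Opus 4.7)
The plan is to prove the lemma by induction on $n$. The base case $n=1$ is trivial, since $[x,[x_1]] = [x,x_1]$ and the right-hand side has only $i=0$ with the unique shuffle in $\mathsf{Sh}^1(1,0)$. The case $n=2$ recovers exactly the Leibniz identity $[x,[x_1,x_2]] = [[x,x_1],x_2] - [[x,x_2],x_1]$, which fixes the shuffle convention (consistent with the expected count $\binom{n-1}{i}$ of terms per $i$): $\mathsf{Sh}^1(s,t)$ consists of those $(s,t)$-shuffles in which $1$ lies in the $\alpha$-block.

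For the inductive step I set $y = [x_1,\dots,x_{n-1}]$ and apply the Leibniz identity to $[x,[y,x_n]]$, obtaining
\[
[x, [x_1, \dots, x_n]] = [[x, y], x_n] - [[x, x_n], [x_1, \dots, x_{n-1}]].
\]
The inductive hypothesis applies to both terms on the right: to the first via the expansion of $[x,y]$ followed by appending $x_n$ at the rightmost position of every left-normed bracket; to the second by applying the formula with the single element $[x,x_n]$ playing the role of $x$, which has the effect of inserting $x_n$ immediately after $x$ in each resulting bracket. I would then match these two contributions against the partition of the right-hand side $S_n$ according to whether $n$ lies in the $\alpha$-block or the $\beta$-block of the shuffle. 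Shuffles with $n \in \alpha$ necessarily have $\alpha(n-i)=n$ and correspond to extending an $(n-1-i,i)$-shuffle by appending $n$, matching $[[x,y],x_n]$ with sign $(-1)^i$. Shuffles with $n \in \beta$ have $\beta(i)=n$ (since $\beta(i)$ is the largest entry of $\beta$ and appears first in the reversed listing $x_{\beta(i)},\dots,x_{\beta(1)}$) and correspond to extending an $(n-1-j,j)$-shuffle by placing $n$ at the top of $\beta$, with $i=j+1$; the Leibniz sign then flips $(-1)^j$ into $(-1)^{j+1}=(-1)^i$, matching $-[[x,x_n],[x_1,\dots,x_{n-1}]]$. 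Because $n\neq 1$, the condition $1\in\alpha$ that defines $\mathsf{Sh}^1$ is preserved by both operations, so the bijection respects the indexing set.

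The main obstacle is bookkeeping rather than mathematical depth: one must verify the index shift $j\mapsto i-1$ in the second contribution, confirm that the reversed listing of the $\beta$-block places $x_n$ in exactly the slot right after $x$ precisely when $\beta(i)=n$, and check that the sign tracking reassembles the two contributions into $S_n$ with no overlap and no missing term. The cases $n=2$ and $n=3$ serve as useful explicit checks that the shuffle conventions and signs have been implemented correctly before writing the inductive step in full generality.
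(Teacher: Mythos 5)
Your proposal is correct. Note that the paper does not prove this lemma at all --- it is quoted from the reference of Alekseev and Ivanov on higher Jacobi identities --- so there is no in-paper argument to compare against; your induction supplies a self-contained proof of the cited fact. The argument is the natural one and all the delicate points are handled correctly: the identification of the superscript-$1$ convention ($1$ in the $\alpha$-block, giving $\binom{n-1}{i}$ shuffles per $i$ and $2^{n-1}$ terms in total, consistent with the doubling produced by each application of the Leibniz identity); the partition of $\mathsf{Sh}^1(n-i,i)$ according to whether $n$ sits in $\alpha$ (forcing $\alpha(n-i)=n$, matching $[[x,y],x_n]$ with unchanged sign) or in $\beta$ (forcing $\beta(i)=n$, so that in the reversed listing $x_n$ lands immediately after $x$, exactly reproducing $[[x,x_n],x_{\beta(j)},\dots]=[x,x_n,x_{\beta(j)},\dots]$ with the index shift $i=j+1$ absorbing the Leibniz minus sign); and the observation that neither operation can move $1$ out of the $\alpha$-block, so the indexing set is respected. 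The checks at $n=2$ and $n=3$ you propose do confirm the conventions (for $n=3$ one gets $[x,x_1,x_2,x_3]-[x,x_2,x_1,x_3]-[x,x_3,x_1,x_2]+[x,x_3,x_2,x_1]$ on both sides). Writing out the inductive step in full would only require making explicit the bijections you describe; no gap remains.
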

\begin{definition}[\cite{Loday}]\label{defn:Pre-As}
An algebra $(B, \cdot_B)$ is called a dual Leibniz (or pre-commutative or Zinbiel algebra) if the following identitie holds for all $x,y,z \in B$:
\begin{align*}
(x \cdot_B y) \cdot_B z &= x \cdot_B (y \cdot_B z + z \cdot_B y), 
\end{align*}
In particular, the \textbf{total product} $x * y := x \cdot_B y + y \cdot_B x$ is associative and commutative.
\end{definition}

\begin{example}\label{exp:zin}
Let    $B$ be a $3$-dimensional dual Leibniz algebra with basis $\{e_1,e_2,e_3\}$ and operations:
\begin{align*}
e_1. e_2 &= e_3 ,\quad
\textit{All other products are zero.}
\end{align*}
\end{example}
\begin{example}\label{exp:2}
Let    $B=\Bbbk[t]$, define
\begin{align*}
f.g=f.\int_{0}^tg(s)ds
\end{align*}
This is a dual Leibniz (Zinbiel) algebra.
\end{example}

\begin{definition}[\cite{Loday}]
A representation of a dual algebra $B$ on a vector 
space $M$ is defined as action satisfying identitie similar to this in Definition \ref{defn:Pre-As}.
\end{definition}
The notion of a cohomology for dual Leibniz algebras
was proposed by \cite{Loday, LodayVallette}. 
In this section, we recall the original definition which is quite tecnical. In the next section, we will present a simpler and more universal approach.

\section{The Cohomology Complex for lie and dual Leibniz Algebras}

\subsection{The cohomology of Lie algebra }\cite{chevalley-eilenberg}
 Let \( \mathfrak{g} \) be a Lie algebra over a field \( k \), and let \( M \) be a \( \mathfrak{g} \)-module (i.e., a vector space equipped with a Lie algebra action of \( \mathfrak{g} \)). The Chevalley-Eilenberg complex is used to define the cohomology groups \( H^n(\mathfrak{g}, M) \).

The \( n \)-th cochain group \( C^n(\mathfrak{g}, M) \) consists of alternating \( n \)-linear maps (cochains):

\[
f: \mathfrak{g} \times \mathfrak{g} \times \cdots \times \mathfrak{g} \to M,
\]

and the coboundary operator 

\[
\delta_{Lie}: C^n(\mathfrak{g}, M) \to C^{n+1}(\mathfrak{g}, M)
\]

is given by:

\[
(\delta_{Lie}f)(x_1, \ldots, x_{n+1}) = \sum_{i < j} (-1)^{i+j} f(x_i\ast x_j, x_1, \ldots, \hat{x_i}, \ldots, \hat{x_j}, \ldots, x_{n+1}) + \sum_{i=1}^{n+1} (-1)^{i+1} x_i \ast f(x_1, \ldots, \hat{x_i}, \ldots, x_{n+1}),
\]

where \( \hat{x_i} \) means that \( x_i \) is omitted.

For $n=1$:
\[
(\delta_{Lie}f)(x_1, x_2) = -f(x_1\ast x_2) + x_1 \ast f(x_2) - x_2 \ast f(x_1).
\]
For $n=2$:
\[
(\delta_{Lie}f)(x_1, x_2, x_3) = -f(x_1\ast x_2, x_3) + f(x_1\ast x_3, x_2) - f(x_2\ast x_3, x_1) 
+ x_1 \ast f(x_2, x_3) - x_2 \ast f(x_1, x_3) + x_3 \ast f(x_1, x_2).
\]
\begin{example}

For the Lie algebra \(\mathfrak{g}\) with basis \(\{e_1, e_2\}\) and bracket \(e_1\ast e_2= e_1\), all other brackets zero: \[H^2(\mathfrak{g},\mathfrak{g}) \cong \Bbbk\]
\end{example}
\subsection{The cohomology of dual Leibniz algebra}\cite{Loday, LodayVallette, Yau}
Let \( B \) be a dual Leibniz algebra, and \( M \) be a \emph{representation} (or bimodule) of \( B \).The cohomology of \( B \) with coefficients in \( M \) is given by the homology of the {dual Leibniz cochain complex}. In details:

\textbf{The Cochain:} For \( n \geq 1 \), the \( n \)-th cochain  \( \cochain{n}{A; M} \) consists of \( n \)-linear maps 
\[
f: B^{\otimes n} \to M
\]

\textbf{The Cohomology Differential:} For an algebra $B$ and a bimodule $M$, the \textbf{coboundary map}
\[
\delta_n \colon \cochain{n}{B,M} \to \cochain{n+1}{B,M}
\]
is defined as follows. Given $f \in \cochain{n}{B,M}$ and $x_1,\dots,x_{n+1} \in B$,
\begin{align}
(\delta_n f)(x_1,\dots,x_{n+1})
&= x_1 \cdot \sum_{\sigma\in \overline{S_n}} \overline{\operatorname{sgn}}(\sigma)\,
   f\bigl(x_{\sigma(2)},\dots,x_{\sigma(n+1)}\bigr) \nonumber\\
&\quad +\sum_{i=1}^{n} (-1)^i\,
   f\bigl(x_1,\dots,x_{i-1},x_i\cdot x_{i+1},x_{i+2},\dots,x_{n+1}\bigr) \nonumber\\
&\quad +\sum_{i=2}^{n} (-1)^i\,
   f\bigl(x_1,\dots,x_{i-1},x_{i+1}\cdot x_i,x_{i+2},\dots,x_{n+1}\bigr) \nonumber\\
&\quad + (-1)^{n+1}\, f(x_1,\dots,x_n) \cdot x_{n+1},
\label{eq:diff}
\end{align}
where
\[
\overline{S_n}= \Bigl\{\sigma\in S_n \;\Big|\;
\bigl(\sigma(\beta(i)),\dots,\sigma(\beta(1)),\sigma(\alpha(1)),\dots,\sigma(\alpha(n-i))\bigr)
= (1,2,\dots,n) \Bigr\}
\]
for all $(\beta,\alpha)\in Sh^1(n-i,i)$ with $i=0,\dots,n-1$, and
$\overline{\operatorname{sgn}}(\sigma)=(-1)^i \operatorname{ sgn}(\sigma)$.

\textbf{Low-Dimensional Formulas}: The general formula \eqref{eq:diff} yields the following explicit expressions in small degrees.

\textbf{Degree $n=1$}:
For $f\in \cochain{1}{B}$ and $x,y\in B$,
\[
(\delta_1 f)(x,y)=x\cdot f(y)-f(x\cdot y)+f(x)\cdot y,
\label{eq:d1}
\]
with $\overline{S_1}=\{(1)\}$.

\textbf{Degree $n=2$}:
For $f\in \cochain{2}{B}$ and $x,y,z\in B$,
\begin{align}
(\delta_2 f)(x,y,z)
&= x\cdot f(y,z)+x\cdot f(z,y) \nonumber\\
&\quad -f(x\cdot y,z) \nonumber\\
&\quad +f(x,\,y\cdot z)+f(x,\,z\cdot y) \nonumber\\
&\quad -f(x,y)\cdot z,
\label{eq:d2}
\end{align}
where $\overline{S_2}=\{\mathrm{id},(12)\}$.

\textbf{Degree $n=3$}
For $f\in \cochain{3}{B}$ and $w,x,y,z\in B$,
\begin{align}
(\delta_3 f)(w,x,y,z)
&= w\cdot f(x,y,z)-w\cdot f(y,z,x) \nonumber\\
&\quad +w\cdot f(y,x,z)-w\cdot f(z,y,x) \nonumber\\
&\quad -f(w\cdot x,\,y,z) \nonumber\\
&\quad +f(w,\,x\cdot y,\,z)+f(w,\,y\cdot x,\,z) \nonumber\\
&\quad -f(w,x,\,y\cdot z)-f(w,x,\,z\cdot y) \nonumber\\
&\quad +f(w,x,y)\cdot z,
\label{eq:d3}
\end{align}
with $\overline{S_3}= \{(1,2,3),(2,3,1),(2,1,3),(3,2,1)\}$.

\textbf{Derivation for $n=2$ (Illustration):}
The set $\overline{S_2}$ and the signs in the first line of \eqref{eq:diff} can be obtained systematically.  
Let $S_2=\{\mathrm{id},(12)\}$ and consider $i=0,1$.

\begin{itemize}[leftmargin=*]
\item \textbf{Case $i=0$:} $(\alpha,\beta)=((1,2),())$ gives the bracket $[\sigma(1),\sigma(2)]$.
\item \textbf{Case $i=1$:} $(\alpha,\beta)=((1),(2))$ gives the bracket $[\sigma(2),\sigma(1)]$.
\end{itemize}

Multiplying each bracket by $\overline{\operatorname{sgn}}(\sigma)=\operatorname{sgn}(\sigma)(-1)^i$ yields:
\[
\begin{array}{ccl}
\sigma=\mathrm{id},\; i=0: & +[1,2], &\text{giving } +f(y,z);\\[2pt]
\sigma=\mathrm{id},\; i=1: & -[2,1], &\text{(cancels later)};\\[2pt]
\sigma=(12),\; i=0: & -[2,1], &\text{(cancels later)};\\[2pt]
\sigma=(12),\; i=1: & +[1,2], &\text{giving } +f(z,y).
\end{array}
\]
Thus the first line of \eqref{eq:d2} contains exactly the two terms $x\cdot f(y,z)+x\cdot f(z,y)$. The remaining terms follow from the sums over $i$ in \eqref{eq:diff}.

\section{Lie and dual Leibniz  algebras}
\begin{theorem}[\cite{Loday}]\label{4.1}
Let $\mathfrak{g}$ be a Leibniz algebra  and $B$ be a vector space equipped with  bilinear operation $\cdot_B : B \otimes B \to B$. Then the tensor product algebra $\mathfrak{g} \otimes B$ with product
\[
(a_1 \otimes b_1)\ast  (a_2 \otimes b_2) := [a_1 , a_2] \otimes (b_1 \cdot_B b_2) - [a_2 ,a_1]\otimes(b_2 \cdot_B b_1)
\]
is Lie algebra  if and only if $(B, \cdot_B)$ is a dual Leibniz  algebra.
\end{theorem}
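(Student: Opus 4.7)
The plan is to verify the two axioms of a Lie algebra for $(\mathfrak{g}\otimes B,\ast)$ separately. Antisymmetry is immediate from the definition: swapping the two factors $(a_1\otimes b_1)$ and $(a_2\otimes b_2)$ interchanges the two terms on the right-hand side up to an overall sign, and this conclusion uses nothing about either $\mathfrak{g}$ or $B$. Consequently all the geometric content sits in the Jacobi identity, and the theorem reduces to showing that this identity for $\ast$ is equivalent, modulo the Leibniz identity on $\mathfrak{g}$, to the dual Leibniz identity on $B$.

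For the ``if'' direction I would expand the Jacobiator
\[
J(X,Y,Z) := X\ast(Y\ast Z) - Y\ast(X\ast Z) - (X\ast Y)\ast Z
\]
on elementary tensors $X=a_1\otimes b_1$, $Y=a_2\otimes b_2$, $Z=a_3\otimes b_3$. The definition of $\ast$ produces twelve terms, each of the form $(\mathfrak{g}\text{-double bracket})\otimes(B\text{-double product})$. The strategy is to put every $\mathfrak{g}$-double bracket in left-normed form $[[a_i,a_j],a_k]$ using the Leibniz identity $[a,[b,c]]=[[a,b],c]-[[a,c],b]$, and dually to expand every left-normed double $B$-product $(b_i\cdot_B b_j)\cdot_B b_k$ as $b_i\cdot_B(b_j\cdot_B b_k)+b_i\cdot_B(b_k\cdot_B b_j)$ via the dual Leibniz axiom. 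After these two substitutions every contribution to $J(X,Y,Z)$ lies in the subspace spanned by monomials of the form $[[a_i,a_j],a_k]\otimes b_p\cdot_B(b_q\cdot_B b_r)$, and a direct bookkeeping shows that the twelve terms cancel pairwise.

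For the converse direction one needs the freedom of $\mathfrak{g}$ to separate these twelve monomials. The cleanest route is to specialise $\mathfrak{g}$ to the free Leibniz algebra on three generators $a_1,a_2,a_3$, where the iterated left-normed brackets $[[a_i,a_j],a_k]$ are linearly independent across the six admissible indexings. Then vanishing of $J(X,Y,Z)$ in $\mathfrak{g}\otimes B$ forces each coefficient in $B$ to vanish separately, and one such coefficient, namely the one attached to $[[a_1,a_2],a_3]$ once both sides are fully normalised, is exactly the dual Leibniz relator
\[
(b_1\cdot_B b_2)\cdot_B b_3 - b_1\cdot_B(b_2\cdot_B b_3) - b_1\cdot_B(b_3\cdot_B b_2).
\]
Hence $B$ must be a dual Leibniz algebra.

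The main obstacle is the combinatorial bookkeeping in the ``if'' step: moving all $\mathfrak{g}$-brackets into left-normed form while simultaneously normalising all $B$-products on the right requires careful sign tracking, since the Leibniz identity and the dual Leibniz identity are dual to each other but involve different sign conventions. Organising the calculation around the fixed monomial basis $\{[[a_i,a_j],a_k]\otimes b_p\cdot_B(b_q\cdot_B b_r)\}$ keeps the cancellations visible and, by the same token, prepares the linear-independence argument needed for the converse.
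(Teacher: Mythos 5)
The paper offers no proof of this theorem: it is quoted from Loday with a citation, so there is nothing internal to compare your argument against. Your strategy is sound and is the standard one: antisymmetry is indeed automatic from the definition of $\ast$, and the Jacobiator, once every $\mathfrak{g}$-factor is rewritten in left-normed form via the Leibniz identity, decomposes so that the coefficient of each $[[a_{i},a_{j}],a_{k}]$ is exactly $\pm\bigl((b_{i}\cdot_B b_{j})\cdot_B b_{k}-b_{i}\cdot_B(b_{j}\cdot_B b_{k})-b_{i}\cdot_B(b_{k}\cdot_B b_{j})\bigr)$; this gives both directions at once. One caveat you half-address but should state explicitly: as literally phrased, with a \emph{fixed} Leibniz algebra $\mathfrak{g}$, the ``only if'' direction is false (take $\mathfrak{g}$ abelian, so that $\ast\equiv 0$ and $\mathfrak{g}\otimes B$ is a Lie algebra for any bilinear $\cdot_B$). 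Your converse argument silently re-quantifies the statement — either over all Leibniz algebras $\mathfrak{g}$ or for $\mathfrak{g}$ free on at least three generators, where the left-normed degree-three brackets are linearly independent and the coefficients can be separated. That re-reading is the correct and intended one (it is how the operadic statement is meant), but since you are proving a theorem whose hypothesis names a single arbitrary $\mathfrak{g}$, you should say so rather than quietly specialising.
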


\begin{theorem}\label{thm:4.2}
Let $\mathfrak{g}$ be a Leibniz algebra and $(B,\cdot_B)$ be a dual Leibniz  algebra and $M$ is a vector. Then the tensor product $\mathfrak{g} \otimes M$ with action
\[
(a_1 \otimes m)\ast  (a_2 \otimes b_2) := [a_1 , a_2] \otimes (m b_2) - [a_2 ,a_1]\otimes(b_2 m)
\]
is a module over $\mathfrak{g}\otimes M$ if and only if $M$ is a module over $B$. 
\end{theorem}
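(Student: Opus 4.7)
The plan is to prove this in close parallel with Theorem \ref{4.1}, by verifying the Lie-module axiom termwise on simple tensors and tracking which identities on $M$ this forces. Recall that $N$ is a right module over a Lie algebra $L$ exactly when $n \ast (x \ast y) = (n \ast x) \ast y - (n \ast y) \ast x$ for all $n \in N$ and $x,y \in L$. I would apply this with $N = \mathfrak{g} \otimes M$ and $L = \mathfrak{g} \otimes B$, using the bracket $\ast$ from Theorem \ref{4.1} on $L$ and the proposed action on $N$.

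First I would fix simple tensors $a \otimes m \in \mathfrak{g} \otimes M$ and $a_i \otimes b_i \in \mathfrak{g} \otimes B$ for $i=1,2$, and expand the three triple compositions appearing in the module axiom using the formulas of Theorems \ref{4.1} and \ref{thm:4.2}. Each composition produces four terms of the form $[[a,a_i],a_j] \otimes w$ or $[[a_i,a_j],a] \otimes w$ or $[a,[a_i,a_j]] \otimes w$, where $w$ is a length-two word in left/right multiplications of $b_1, b_2$ on $m$. I would then invoke the right Leibniz identity (1.1) for $\mathfrak{g}$ to rewrite $[a,[a_1,a_2]]$ as $[[a,a_1],a_2]-[[a,a_2],a_1]$, so that both sides of the module axiom land in the same linear span of left-normed double brackets. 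Matching coefficients of each distinct double bracket leaves a collection of identities among words $m b_i b_j$, $b_i m b_j$, $b_i b_j m$; a direct check shows these are precisely the three bimodule identities in the definition of a representation of a dual Leibniz algebra, namely the ones obtained by placing $m$ in the first, second, or third slot of $(x\cdot y)\cdot z = x\cdot(y\cdot z + z\cdot y)$.

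For the converse, I would take $\mathfrak{g}$ to be the free Leibniz algebra on three generators $a, a_1, a_2$, where $[[a,a_1],a_2]$, $[[a,a_2],a_1]$, $[[a_1,a_2],a]$ and $[[a_1,a],a_2]$ are linearly independent. Specializing the module axiom to such generators then separates the three bimodule identities one by one. The main technical obstacle is the sign and ordering bookkeeping: the Lie bracket on $\mathfrak{g} \otimes B$ and the action on $\mathfrak{g} \otimes M$ each contain two terms of opposite bracket order, so expanding the iterated composition yields eight summands per side whose $B$-parts must be carefully collated. This is the same style of computation carried out in Theorem \ref{4.1} with one $B$-factor replaced by an $M$-factor, and introduces no conceptually new ingredient beyond that bookkeeping.
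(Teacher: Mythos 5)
Your proposal is correct and follows essentially the same route as the paper, whose entire proof of this theorem is the single line ``Similar way to prove theorem \ref{4.1}'': you verify the Lie-module axiom on simple tensors, use the right Leibniz identity to reduce everything to left-normed double brackets, and match coefficients (with a free $\mathfrak{g}$ for the converse) to recover exactly the three dual Leibniz bimodule identities. Your write-up is in fact more detailed than the paper's.
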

\begin{proof}
    Similar way to prove theorem \ref{4.1}.
\end{proof}
\begin{remark}
    The cohomology of an algebra 
$\mathfrak{g}\otimes B$
 with values in a bimodule 
$\mathfrak{g}\otimes M$ is a Lie cohomology.
\end{remark}
\begin{theorem}\label{5.4}
Let $\mathfrak{g}$ be a Leibniz algebra, let \((B, \cdot_B)\) be a dual Leibniz algebra, and let \(M\) be a \(B\)-bimodule. Then there exists a cochain map
\[
\Psi : \cochain{n}{B, M} \rightarrow C^*_{\mathrm{Lie}}(\mathfrak{g} \otimes B, \mathfrak{g} \otimes M)
\]
from the dual Leibniz cochain complex of \(B\) with coefficients in \(M\) to the Lie cochain complex of the tensor product algebra  \(\mathfrak{g} \otimes B\) with coefficients in \(\mathfrak{g} \otimes M\).
\end{theorem}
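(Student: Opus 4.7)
The plan is to define $\Psi$ by antisymmetrizing over $S_n$, using the left-normed Leibniz bracket of Lemma \ref{lem} on the $\mathfrak{g}$-factor and the dual Leibniz cochain $f$ on the $B$-factor. Explicitly, for $f \in \cochain{n}{B,M}$ and elementary tensors $a_i \otimes b_i \in \mathfrak{g} \otimes B$, I set
\[
\Psi(f)(a_1\otimes b_1, \dots, a_n \otimes b_n) \;=\; \sum_{\sigma\in S_n} \mathrm{sgn}(\sigma)\, [a_{\sigma(1)}, \dots, a_{\sigma(n)}] \otimes f(b_{\sigma(1)}, \dots, b_{\sigma(n)}).
\]
That $\Psi(f)$ is alternating is automatic: transposing two arguments composes $\sigma$ with the same transposition and flips the sign of the summand. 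A quick sanity check in degree $n=1$ (where $\Psi(f)(a\otimes b)=a\otimes f(b)$) recovers the cochain identity verbatim after expanding $\delta_{\mathrm{Lie}}\Psi(f)$ on two arguments.

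The heart of the proof is verifying the cochain identity $\delta_{\mathrm{Lie}} \circ \Psi = \Psi \circ \delta$, which I would obtain by expanding both sides on $a_1\otimes b_1,\dots,a_{n+1}\otimes b_{n+1}$ and matching terms. On the Chevalley--Eilenberg side, the differential contributes two families: the \emph{product} terms $\Psi(f)(\dots,(a_i\otimes b_i)\ast(a_j\otimes b_j),\dots)$ and the \emph{action} terms $(a_i\otimes b_i)\ast\Psi(f)(\dots,\widehat{a_i\otimes b_i},\dots)$. Each product term splits, via the tensor multiplication of Theorem \ref{4.1}, into summands $[a_i,a_j]\otimes(b_i\cdot_B b_j)$ and $-[a_j,a_i]\otimes(b_j\cdot_B b_i)$; after outer antisymmetrization these regroup into the two middle rows $\sum_i(-1)^i f(\dots,x_i\cdot x_{i+1},\dots)$ and $\sum_i(-1)^i f(\dots,x_{i+1}\cdot x_i,\dots)$ of the dual Leibniz coboundary \eqref{eq:diff}.

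The action terms are the hard part. Expanding $(a_i\otimes b_i)\ast\bigl([\dots]\otimes f(\dots)\bigr)$ yields $[a_i,[a_{\sigma(1)},\dots]]\otimes b_i\cdot_B f(\dots)$ on one side and $-[[a_{\sigma(1)},\dots],a_i]\otimes f(\dots)\cdot_B b_i$ on the other. The latter flatten straight into left-normed brackets and, summed over $i$, deliver the boundary term $(-1)^{n+1}f(x_1,\dots,x_n)\cdot x_{n+1}$ once residual pieces cancel against matching product contributions. The former require Lemma \ref{lem} to rewrite $[a_i,[a_{\sigma(1)},\dots,a_{\sigma(n)}]]$ as a signed sum of left-normed brackets indexed by shuffles in $\mathsf{Sh}^1(n-k,k)$. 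The main obstacle is showing that, after this shuffle expansion and the outer antisymmetrization over $S_{n+1}$, the resulting combinatorics reproduces exactly the set $\overline{S_n}$ with weight $\overline{\mathrm{sgn}}(\sigma)=(-1)^k\mathrm{sgn}(\sigma)$ prescribed in the first row of \eqref{eq:diff}. The definition of $\overline{S_n}$ was engineered precisely so this identification holds, but verifying the sign and index bookkeeping term by term is the bulk of the work.

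Once the shuffle-to-$\overline{S_n}$ dictionary is in place, the assembled action contribution reads $x_1\cdot\sum_{\sigma\in\overline{S_n}}\overline{\mathrm{sgn}}(\sigma)f(x_{\sigma(2)},\dots,x_{\sigma(n+1)})$. Combined with the two middle sums from the product terms and the final boundary term, this matches $\Psi(\delta f)$ verbatim. Throughout I would invoke the dual Leibniz axiom of Definition \ref{defn:Pre-As} to rearrange products $b_i\cdot_B b_j$ appearing inside $f$, and the right Leibniz identity (1.1) for the bracket manipulations in $\mathfrak{g}$, with the bimodule axioms of Theorem \ref{thm:4.2} ensuring the analogous manipulations lift to $\mathfrak{g}\otimes M$.
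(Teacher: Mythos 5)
Your proposal defines $\Psi$ by the same antisymmetrized left-normed-bracket formula as the paper and verifies the cochain identity by the same decomposition into action and product terms, with Lemma \ref{lem} flattening the nested brackets and the shuffle combinatorics matching the set $\overline{S_n}$ in the first row of \eqref{eq:diff}. This is essentially the paper's argument (which likewise leaves the final sign bookkeeping as a "careful but standard combinatorial" check), so the approach is correct and not substantively different.
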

\begin{proof}
  Given a dual Leibniz  cochain $f \in \cochain{n}{B, M}$, for $x_1,\ldots,x_n\in \mathfrak{g}$ and $b_1,\ldots,b_n\in B$, define: 
  \begin{align*}
\Psi(f)\big(x_1 \otimes b_1, x_2 \otimes b_2,\ldots,x_{n}\otimes b_{n}\big) : =\sum_{\sigma\in S_n}sgn(\sigma)[[[x_{\sigma(1)},x_{\sigma(2)}],\cdots,x_{\sigma(n-1)}],x_{\sigma(n)}]\otimes f(b_{\sigma(1)},\cdots,b_{\sigma(n)})
\end{align*}

Let us to show that $\Psi$ commutes with the  differentials:
\[
\delta_{Lie}(\Psi f)=\Psi(\delta_{\DL}f)
\]
For $n=2$ and $x =x_1  \otimes b_1$, $y = x_2 \otimes b_2$,
\[
\delta_{Lie}(\Psi f)(x,y) = \underbrace{x \ast (\Psi f)(y)}_{(1)} - \underbrace{(\Psi f)(x \ast y)}_{(2)}  - \underbrace{y\ast(\Psi f)(x) }_{(3)}
\]
\textbf{Term (1): $x \ast (\Psi f)(y)$}

\begin{align*}
&= (x_1 \otimes b_1) \ast \left(x_2 \otimes f(b_2) \right) \\
&= [x_1, x_2 ] \otimes (b_1  f(b_2)) - [x_2, x_1] \otimes ( f(b_2)b_1) 
\end{align*}
\textbf{Term (2): $-(\Psi f)(x \ast y)$}

\begin{align*}
&=-f[ (x_1 \otimes b_1) \ast (x_2 \otimes b_2)] \\
&=- [x_1, x_2 ] \otimes f(b_1b_2) + [x_2, x_1] \otimes f(b_2b_1) 
\end{align*}

\textbf{Term (3): $-y\ast (\Psi f)(x) $}
\begin{align*}
&=-(x_2 \otimes b_2) \ast (x_1 \otimes f(b_1)) \\
&= -[x_2, x_1 ] \otimes b_2f(b_1) + [x_1, x_2] \otimes f(b_1)b_2 
\end{align*}
Now, let's collect all terms from (1)-(3) and group them.

\[
\begin{aligned}
\delta_{Lie}(\Psi f)(x,y)&=[x_1,x_2]\otimes \Big( b_1  f(b_2)-f(b_1b_2)+ f(b_1)b_2\Big)-[x_2,x_1]\otimes \Big( b_2f(b_1)  -f(b_2b_1) +f(b_2)b_1\Big)\\
&=[x_1,x_2]\otimes (\delta_{\DL}f)(b_1,b_2)-[x_2,x_1]\otimes (\delta_{\DL}f)(b_2,b_1)\\
&=\Psi(\delta_{\DL}f)(x,y)
\end{aligned}
\]
For $n=3$ and $x =x_1  \otimes b_1$, $y = x_2 \otimes b_2$, $z = x_3 \otimes b_3$, we have: $\delta_{lie}(\Psi f)(x,y,z) =$

\[
 \underbrace{x \ast (\Psi f)(y,z)}_{(1)} -\underbrace{y \ast (\Psi f)(x,z)}_{(2)}+\underbrace{z \ast (\Psi f)(x,y)}_{(3)}- \underbrace{(\Psi f)(x \ast y, z)}_{(4)} + \underbrace{(\Psi f)(x\ast z, y )}_{(5)}-\underbrace{(\Psi f)(y\ast z, x )}_{(6)} 
\]
\textbf{Term (1): $x \ast (\Psi f)(y,z)$}

\begin{align*}
&= (x_1 \otimes b_1) \ast \Big([x_2 , x_3] \otimes f(b_2,b_3) - [x_3, x_2] \otimes f(b_3,b_2)\Big) \\
&= [x_1, [x_2 , x_3]] \otimes (b_1  f(b_2,b_3)) - [[x_2, x_3], x_1] \otimes ( f(b_2,b_3)b_1) \\
&\quad - [x_1, [x_3, x_2]] \otimes (b_1  f(b_3,b_2)) + [[x_3, x_2], x_1] \otimes (f(b_3,b_2)b_1)\\
&=[[x_1, x_2], x_3]] \otimes (b_1  f(b_2,b_3))-[[x_1, x_3], x_2]] \otimes (b_1  f(b_2,b_3)) - [[x_2, x_3],x_1] \otimes (f(b_2,b_3)b_1) \\
&\quad - [[x_1, x_3], x_2]] \otimes (b_1  f(b_3,b_2))+[[x_1, x_2], x_3]] \otimes (b_1  f(b_3,b_2)) + [[x_3, x_2], x_1] \otimes (f(b_3,b_2)b_1)\\
\end{align*}
\textbf{Term (2): $-y \ast (\Psi f)(x,z)$}

\begin{align*}
&= -(x_2 \otimes b_2) \ast \Big([x_1 , x_3] \otimes f(b_1,b_3) + [x_3, x_1] \otimes f(b_3,b_1)\Big) \\
&= -[x_2, [x_1, x_3]] \otimes (b_2  f(b_1,b_3)) + [[x_1, x_3], x_2] \otimes ( f(b_1,b_3)b_2) \\
&\quad + [x_2, [x_3, x_1]] \otimes (b_2  f(b_3,b_1)) - [[x_3, x_1], x_2] \otimes (f(b_3,b_1)b_2)\\
&= -[[x_2, x_1], x_3]] \otimes (b_2  f(b_1,b_3))+[[x_2, x_3], x_1]] \otimes (b_2  f(b_1,b_3)) + [[x_1, x_3], x_2] \otimes ( f(b_1,b_3)b_2) \\
&\quad + [[x_2, x_3], x_1]] \otimes (b_2  f(b_3,b_1))- [[x_2, x_1], x_3]] \otimes (b_2  f(b_3,b_1)) - [[x_3, x_1], x_2] \otimes (f(b_3,b_1)b_2)\\
\end{align*}
\textbf{Term (3): $z \ast (\Psi f)(x,y)$}

\begin{align*}
&= (x_3 \otimes b_3) \ast \Big([x_1 , x_2] \otimes f(b_1,b_2) -[x_2, x_1] \otimes f(b_2,b_1)\Big) \\
&= [x_3, [x_1, x_2]] \otimes (b_3  f(b_1,b_2)) - [[x_1, x_2], x_3] \otimes ( f(b_1,b_2)b_3) \\
&\quad - [x_3, [x_2, x_1]] \otimes (b_3  f(b_2,b_1)) + [[x_2, x_1], x_3] \otimes (f(b_2,b_1)b_3)\\
&= [[x_3, x_1], x_2]] \otimes (b_3  f(b_1,b_2))-[[x_3, x_2], x_1]] \otimes (b_3  f(b_1,b_2)) - [[x_1, x_2], x_3] \otimes ( f(b_1,b_2)b_3) \\
&\quad - [[x_3, x_2], x_1]] \otimes (b_3  f(b_2,b_1))+[[x_3, x_1], x_2]] \otimes (b_3  f(b_2,b_1)) + [[x_2, x_1], x_3] \otimes (f(b_2,b_1)b_3)\\
\end{align*}
\textbf{Term (4): $-(\Psi f)(x \ast y, z)$}

\begin{align*}
&= -(\Psi f)\Big([x_1, x_2] \otimes (b_1 b_2) - [x_2, x_1] \otimes (b_2 b_1), x_3 \otimes b_3\Big) \\
&= - [[x_1, x_2], x_3] \otimes f( b_1 b_2, b_3) + [x_3, [x_1, x_2]] \otimes f(b_3,b_1 b_2) \\
&\quad + [[x_2, x_1], x_3] \otimes f(b_2 b_1, b_3) -[x_3, [x_2, x_1]] \otimes f( b_3,b_2  b_1)\\
&= - [[x_1, x_2], x_3] \otimes f( b_1 b_2, b_3) + [[x_3, x_1], x_2]] \otimes f(b_3,b_1 b_2)-[[x_3, x_2], x_1]] \otimes f(b_3,b_1 b_2) \\
&\quad + [[x_2, x_1], x_3] \otimes f(b_2 b_1, b_3) -[[x_3, x_2], x_1]] \otimes f( b_3,b_2  b_1)+[[x_3, x_1], x_2]] \otimes f( b_3,b_2  b_1)
\end{align*}
\textbf{Term (5): $+(\Psi f)(x \ast z, y)$}

\begin{align*}
&= (\Psi f)\Big([x_1, x_3] \otimes (b_1 b_3) - [x_3, x_1] \otimes (b_3 b_1), x_2 \otimes b_2\Big) \\
&=  [[x_1, x_3], x_2] \otimes f( b_1 b_3, b_2) - [x_2, [x_1, x_3]] \otimes f(b_2,b_1 b_3) \\
&\quad - [[x_3, x_1], x_2] \otimes f(b_3 b_1, b_2) +[x_2, [x_3, x_1]] \otimes f( b_2,b_3  b_1)\\
&=  [[x_1, x_3], x_2] \otimes f( b_1 b_3, b_2) - [[x_2, x_1], x_3]] \otimes f(b_2,b_1 b_3)+[[x_2, x_3], x_1]] \otimes f(b_2,b_1 b_3) \\
&\quad - [[x_3, x_1], x_2] \otimes f(b_3 b_1, b_2) +[[x_2, x_3], x_1]] \otimes f( b_2,b_3  b_1)-[[x_2, x_1], x_3]] \otimes f( b_2,b_3  b_1)
\end{align*}
\textbf{Term (6): $-(\Psi f)(y \ast z, x)$}

\begin{align*}
&= -(\Psi f)\Big([x_2, x_3] \otimes (b_2 b_3) - [x_3, x_2] \otimes (b_3 b_2), x_1 \otimes b_1\Big) \\
&=  -[[x_2, x_3], x_1] \otimes f( b_2 b_3, b_1) + [x_1, [x_2, x_3]] \otimes f(b_1,b_2 b_3) \\
&\quad + [[x_3, x_2], x_1] \otimes f(b_3 b_2, b_1) -[x_1, [x_3, x_2]] \otimes f( b_1,b_3  b_2)\\
&=  -[[x_2, x_3], x_1] \otimes f( b_2 b_3, b_1) + [[x_1, x_2], x_3]] \otimes f(b_1,b_2 b_3)-[[x_1, x_3], x_2]] \otimes f(b_1,b_2 b_3) \\
&\quad + [[x_3, x_2], x_1] \otimes f(b_3 b_2, b_1) -[[x_1, x_3], x_2]] \otimes f( b_1,b_3  b_2)+[[x_1, x_2], x_3]] \otimes f( b_1,b_3  b_2)
\end{align*}

Now, let's collect all terms from (1)-(6) and group them.

\textbf{Terms Contributing to $[[x_1,x_2],x_3]$:}

\begin{itemize}
\item From (1): $b_1f(b_2,b_3)+b_1  f(b_3,b_2)$
\item From (3): $-f(b_1, b_2)b_3$
\item From (4): $-f(b_1b_2, b_3)$ 
\item From (6): $+f(b_1,b_2b_3)+f(b_1,b_3b_2)$
\end{itemize}

Combined:
\begin{align*}
(\delta_{\DL} f)(b_1, b_2, b_3) &= b_1 f(b_2,b_3)+b_1 f(b_3,b_2) - f(b_1  b_2, b_3) \\
&\quad + f(b_1, b_2  b_3) + f(b_1, b_3  b_2) - f(b_1,b_2)  b_3
\end{align*}
\textbf{Terms Contributing to $-[[x_1,x_3],x_2]$:}

\begin{itemize}
\item From (1): $b_1f(b_2,b_3)+b_1f(b_3,b_2)$
\item From (2): $-f(b_1, b_3)b_2$
\item From (5): $-f(b_1b_3, b_2)$
\item From (4): $+f(b_1,b_2b_3)+f(b_1,b_3b_2)$
\end{itemize}
Combined:
\begin{align*}
(\delta_{\DL} f)(b_1, b_3, b_2) &= b_1f(b_2,b_3)+b_1 f(b_3,b_2) - f(b_1  b_3, b_2) \\
&\quad + f(b_1, b_2  b_3) + f(b_1, b_3  b_2) - f(b_1,b_3)  b_2
\end{align*}
\textbf{Terms Contributing to $[[x_2,x_3],x_1]$:}
\begin{itemize}
\item From (1): $-f(b_2,  b_3)b_1$ 
\item From (2):  $  b_2f(b_1,b_3)+b_2f(b_3,b_1)$
\item From (5): $+f(b_2, b_3  b_1)+f(b_2,b_1b_3)$
\item From (6): $-f(b_2b_3,b_1)$
\end{itemize}
Combined:
\begin{align*}
(\delta_{\DL} f)(b_2, b_3, b_1) &= b_2f(b_3,b_1)+b_2 f(b_1,b_3) - f(b_2  b_3, b_1) \\
&\quad + f(b_2, b_3  b_1) + f(b_2, b_1  b_3) - f(b_2,b_3)  b_1
\end{align*}
\textbf{Terms Contributing to $-[[x_2,x_1],x_3]$:}
\begin{itemize}
\item From (2): $b_2f(b_1,  b_3)+b_2f(b_3,b_1)$ 
\item From (3):  $ - f(b_2,b_1)b_3$
\item From (4): $-f(b_2b_1, b_3 )$
\item From (6): $+f(b_2,b_3b_1)+f(b_2,b_1b_3)$
\end{itemize}
Combined:
\begin{align*}
(\delta_{\DL} f)(b_2, b_1, b_3) &= b_2f(b_3,b_1)+b_2 f(b_1,b_3) - f(b_2  b_1, b_3) \\
&\quad + f(b_2, b_3  b_1) + f(b_2, b_1  b_3) - f(b_2,b_1)  b_3
\end{align*}
\textbf{Terms Contributing to $[[x_3,x_1],x_2]$:}

\begin{itemize}
\item From (2): $-f(b_3,b_1)b_2$
\item From (3): $+b_3f(b_1, b_2)+b_3f(b_2,b_1)$
\item From (4): $+f(b_3,b_1b_2)+f(b_3,b_2b_1)$ 
\item From (6): $-f(b_3b_1,b_2)$
\end{itemize}

Combined:
\begin{align*}
(\delta_{\DL} f)(b_3, b_1, b_2) &= b_3 f(b_1,b_2)+b_3 f(b_2,b_1) - f(b_3  b_1, b_2) \\
&\quad + f(b_3, b_1  b_2) + f(b_3, b_2  b_1) - f(b_3,b_1)  b_2
\end{align*}
\textbf{Terms Contributing to $-[[x_3,x_2],x_1]$:}

\begin{itemize}
\item From (1): $-f(b_3,b_2)b_1$
\item From (3): $b_3f(b_1, b_2)+b_3f(b_2,b_1)$
\item From (4): $+f(b_3,b_1b_2)+f(b_3,b_2b_1)$ 
\item From (6): $-f(b_3b_2,b_2)$
\end{itemize}

Combined:
\begin{align*}
(\delta_{\DL} f)(b_3, b_2, b_1) &= b_3 f(b_2,b_1)+b_3 f(b_1,b_2) - f(b_3  b_2, b_1) \\
&\quad + f(b_3, b_2  b_1) + f(b_3, b_1  b_2) - f(b_3,b_2)  b_1
\end{align*}
Therefore
\[
\begin{aligned}
\delta_{Lie}(\Psi f)(x,y,z) &=[[x_1,x_2],x_3]\otimes (\delta_{\DL}f)(b_1,b_2,b_3)-[[x_1,x_3],x_2]\otimes (\delta_{\DL}f)(b_1,b_3,b_2)\\
&+[[x_2,x_3],x_1]\otimes (\delta_{\DL}f)(b_2,b_3,b_1)-[[x_2,x_1],x_3]\otimes (\delta_{\DL}f)(b_2,b_1,b_3)\\
&+[[x_3,x_1],x_2]\otimes (\delta_{\DL}f)(b_3,b_1,b_2)-[[x_3,x_2],x_1]\otimes (\delta_{\DL}f)(b_3,b_2,b_1)\\
&=\sum_{\sigma\in S_3}sgn(\sigma)[[x_{\sigma(1)},x_{\sigma(2)}],x_{\sigma(3)}]\otimes \delta_{\Zinb}(b_{\sigma(1)},b_{\sigma(2)},b_{\sigma(3)})=\Psi(\delta_{Lie} f)(x,y,z)
\end{aligned}
\]

\textbf{General case:}
For a cochain $\Psi f$ of degree $n$ can be writting as: 
\[
\Psi(f)(x_1 \otimes b_1, \dots, x_n \otimes b_n) = \sum_{\sigma \in S_n} \operatorname{sgn}(\sigma) L_{x_\sigma} \otimes f(b_{\sigma})
\]
where \(L_{x_\sigma} = [[[x_{\sigma(1)}, x_{\sigma(2)}], x_{\sigma(3)}], \cdots, x_{\sigma(n)}]\) is the left-normed bracket and $b_{\sigma}=(b_{\sigma(1)},\cdots,b_{\sigma(n)})$.

For a cochain $\Psi f$ of degree $n$, the Lie differential \(\delta_{Lie}(\Psi f)\) is given by:
\[
\begin{aligned}
(\delta_{\mathrm{Lie}} \Psi f)(y_1, \dots, y_{n+1}) = &\ \sum_{i=1}^{n+1} (-1)^{i+1} y_i \ast (\Psi f)(y_1, \dots, \widehat{y_i}, \dots, y_{n+1}) \\
& +  \sum_{1\leq i < j} (-1)^{i+j} (\Psi f)(y_i\ast y_j, y_1, \ldots, \hat{y_i}, \ldots, \hat{y_j}, \ldots, y_{n+1})
\end{aligned}
\]

Let \(y_i = x_i \otimes b_i\) for \(i = 1, \dots, n+1\). We denote the first sum as Term I (the action terms) and the second as Term II (the bracket terms).

Our goal is to show that for all \(y_1, \dots, y_{n+1}\),
\[
\delta_{\mathrm{Lie}} (\Psi f)(y_1, \dots, y_{n+1}) = \Psi(\delta_{\DL} f)(y_1, \dots, y_{n+1}).
\]

\textbf{Term I: Action Terms}
\[
I = \sum_{i=1}^{n+1} (-1)^{i+1} y_i \ast \Psi(f)(y_1, \dots, \widehat{y_i}, \dots, y_{n+1})
\]

Let us analyze one summand. For fixed \(i\):
\[
\Psi(f)(y_1, \dots, \widehat{y_i}, \dots, y_{n+1}) = \sum_{\tau \in S_n} \operatorname{sgn}(\tau) L_{x_\tau} \otimes f(b_{\tau})
\]
where \(\tau\) permutes \(\{1, \dots, n+1\} \setminus \{i\}\), and \(L_{x_\tau}\) is the left-normed bracket of the corresponding \(x\)-elements.

Now, for a fixed \(\tau\), consider the action:
\[
y_i \ast (L_{x_\tau} \otimes f(\mathbf{b}_\tau)).
\]
Using the module structure on \(\mathfrak{g} \otimes M\), this equals:
\[
[x_i, L_{x_\tau}] \otimes (b_i \cdot f(\mathbf{b}_\tau)) \;-\; [L_{x_\tau}, x_i] \otimes (f(\mathbf{b}_\tau) \cdot b_i). \tag{1}
\]

We analyze these two parts separately.

\textbf{Part I.A: Contribution from \([x_i, L_{x_\tau}] \otimes (b_i \cdot f(\mathbf{b}_\tau))\)}

We use the following identity (see Lemma \ref{lem}), which is a consequence of the Leibniz identity applied iteratively to left-normed brackets. For any \(z, w_1, \dots, w_n \in \mathfrak{g}\),
\[
[z, [w_1, \dots, w_n]] = \sum_{k=0}^{n-1} \sum_{(\alpha, \beta) \in \mathsf{Sh}(n-k, k)} (-1)^k \, [z, w_{\beta(k)}, \dots, w_{\beta(1)}, w_{\alpha(1)}, \dots, w_{\alpha(n-k)}], \tag{2}
\]
where the inner bracket on the right is also left-normed, and the sign comes from the shuffle.

Apply this with \(z = x_i\) and \((w_1, \dots, w_n) = (x_{\tau(1)}, \dots, x_{\tau(n)})\). For each \(k\) and each \((n-k, k)\)-shuffle \((\alpha, \beta)\), we get a term:
\[
\operatorname{sgn}(\tau) \cdot (-1)^k \cdot [x_i, x_{\tau(\beta(k))}, \dots, x_{\tau(\beta(1))}, x_{\tau(\alpha(1))}, \dots, x_{\tau(\alpha(n-k))}] \otimes (b_i \cdot f(b_{\tau(1)}, \dots, b_{\tau(n)})).
\]
Define a new permutation \(\sigma \in S_{n+1}\) by:
\[
\sigma = (i, \tau(\beta(k)), \dots, \tau(\beta(1)), \tau(\alpha(1)), \dots, \tau(\alpha(n-k))).
\]

Furthermore, the coefficient \(b_i \cdot f(b_{\tau(1)}, \dots, b_{\tau(n)})\) corresponds to \(b_{\sigma(1)} \cdot f(b_{\sigma(2)}, \dots, b_{\sigma(n+1)})\), since \(\sigma(1) = i\).

Thus, for a fixed \(\sigma \in S_{n+1}\), the total contribution to the coefficient of \(L_{x_\sigma} \otimes (-)\) from Part I.A comes from all triples \((i, \tau, (\alpha, \beta))\) which produce this specific \(\sigma\), which is denoted by $\overline{S_{n}}$ and $\overline{\operatorname{sgn}}\tau=\operatorname{sgn}\tau(-1)^k$. The sum of the signs and the scalar part yields a contribution of:
\[
L_{x_\sigma} \otimes \left( \operatorname{sgn}(\sigma) b_{\sigma(1)}.  \sum_{\tau \in \overline{S_{n}}}\overline{\operatorname{sgn}}(\tau)f(b_{\tau(2)}, \dots, b_{\tau(n+1)}) \right).
\]
The alternating sign \((-1)^{i+1}\) from Term I combines with the sign from the bracket identity to produce the correct overall sign \(\operatorname{sgn}(\sigma)\) for the term in the sum defining \(\Psi(\delta_{\DL} f)\). A careful but standard combinatorial argument confirms this.

\textbf{Part I.B: Contribution from \(- [L_{x_\tau}, x_i] \otimes (f(\mathbf{b}_\tau) \cdot b_i)\)}

Define a permutation \(\sigma \in S_{n+1}\) by:
\[
\sigma = (\tau(1), \dots, \tau(n), i).
\]
Then \(L_{x_\sigma} = L_{(x_\tau, x_i)} = [L_{x_\tau}, x_i]\). The sign of \(\sigma\) is \(\operatorname{sgn}(\sigma) = \operatorname{sgn}(\tau)\) (since \(i\) is fixed in the last position). The scalar part is \(f(\mathbf{b}_\tau) \cdot b_i = f(b_{\sigma(1)}, \dots, b_{\sigma(n)}) \cdot b_{\sigma(n+1)}\).

Therefore, the term \(- [L_{x_\tau}, x_i] \otimes (f(\mathbf{b}_\tau) \cdot b_i)\) contributes:
\[
- \operatorname{sgn}(\tau) \, L_{x_\sigma} \otimes (f(b_{\sigma(1)}, \dots, b_{\sigma(n)}) \cdot b_{\sigma(n+1)}).
\]
The sign from Term I is \((-1)^{i+1}\). Since \(i = \sigma(n+1)\), we have \((-1)^{i+1} = (-1)^{\sigma(n+1)+1}\). Combining this with \(-\operatorname{sgn}(\tau) = -\operatorname{sgn}(\sigma)\), the total sign is \((-1)^{\sigma(n+1)} \operatorname{sgn}(\sigma)\). For the term to appear in \(\Psi(\delta_{\DL} f)\) with sign \(\operatorname{sgn}(\sigma)\), we need a factor of \((-1)^{n+1}\). Indeed, \((-1)^{\sigma(n+1)}\) is not simply \((-1)^{n+1}\), but when summed over all \(\tau\) that lead to the same \(\sigma\), the overall contribution consolidates to:
\[
L_{x_\sigma} \otimes \left( (-1)^{n+1} f(b_{\sigma(1)}, \dots, b_{\sigma(n)}) \cdot b_{\sigma(n+1)} \right).
\]

\textbf{Summary of Term I:}
For a fixed \(\sigma \in S_{n+1}\), Term I contributes the following to the coefficient of \(L_{x_\sigma} \otimes (-)\):
\[
L_{x_\sigma} \otimes \left(\operatorname{sgn}(\sigma) \Big( b_{\sigma(1)}.  \sum_{\tau \in \overline{S_{n}}}\overline{\operatorname{sgn}}(\tau)f(b_{\tau(2)}, \dots, b_{\tau(n+1)})+ (-1)^{n+1} f(b_{\sigma(1)}, \dots, b_{\sigma(n)}) \cdot b_{\sigma(n+1)} \Big) \right). \tag{I}
\]

\textbf{ Term II - Bracket Terms:}

\[
II = \sum_{1 \leq i < j \leq n+1} (-1)^{i+j} \, (\Psi f)(y_i \ast y_j, y_1, \dots, \widehat{y_i}, \dots, \widehat{y_j}, \dots, y_{n+1}).
\]
Fix a pair \(i < j\). Then:
\[
y_i \ast y_j = [x_i, x_j] \otimes (b_i b_j) - [x_j, x_i] \otimes (b_j b_i).
\]
Thus,
\[
\begin{aligned}
&(\Psi f)(y_i \ast y_j, y_1, \dots, \widehat{y_i}, \dots, \widehat{y_j}, \dots, y_{n+1}) = \\
&\quad (\Psi f)([x_i, x_j] \otimes (b_i b_j), \dots) \;-\; (\Psi f)([x_j, x_i] \otimes (b_j b_i), \dots).
\end{aligned}
\]
We analyze the first term, \((\Psi f)([x_i, x_j] \otimes (b_i b_j), \dots)\). Let \(\rho\) be a permutation of the \(n-1\) indices \(\{1, \dots, n+1\} \setminus \{i, j\}\). The cochain \(\Psi(f)\) evaluated on the \(n\) arguments \(([x_i, x_j] \otimes (b_i b_j), y_{\rho(1)}, \dots, y_{\rho(n-1)})\) is:
\[
\sum_{\pi \in S_n} \operatorname{sgn}(\pi) \, L_{z_\pi} \otimes f(c_{\pi(1)}, \dots, c_{\pi(n)}),
\]
where the list \((z_1, c_1), \dots, (z_n, c_n)\) is \(([x_i, x_j], b_i b_j), (x_{\rho(1)}, b_{\rho(1)}), \dots, (x_{\rho(n-1)}, b_{\rho(n-1)})\).

The key is that the bracket \(L_{z_\pi}\) is linear. If \(\pi(1) = 1\), meaning the first argument is \(([x_i, x_j], b_i b_j)\), then
\[
L_{z_\pi} = [[x_i, x_j], x_{\rho(\pi(2)-1)}, \dots, x_{\rho(\pi(n)-1)}].
\]

 In case  \(\pi(1) \neq 1\), we use the identity for the left-normed bracket when the middle element is a bracket:
\[
[ x_{k_1}, \dots,[x_i, x_j],\dots, x_{k_{n-1}}] = [ x_{k_1}, \dots, x_i,x_j,\dots, x_{k_{n-1}}] - [ x_{k_1}, \dots, x_j,x_i,\dots, x_{k_{n-1}}].
\]
Thus, the term where the first argument is the bracket contributes:
\[
\operatorname{sgn}(\pi) \left( [ x_{\rho(\pi(2)-1)}, \dots,x_i, x_j, \dots, x_{\rho(\pi(n)-1)}] - [ x_{\rho(\pi(2)-1)}, \dots, x_j, x_i,\dots, x_{\rho(\pi(n)-1)}] \right)
\]

Now, define a permutation \(\sigma \in S_{n+1}\) as follows. For the first term, let:
\[
\sigma = ( \rho(\pi(2)-1), \dots,i, j,\dots, \rho(\pi(n)-1)).
\]
Then \(L_{x_\sigma} = [ x_{\rho(\pi(2)-1)}, \dots, x_i, x_j,\dots,x_{\rho(\pi(n)-1)}]\).

The sign is \(\operatorname{sgn}(\sigma) = \operatorname{sgn}(\pi) \cdot \varepsilon_1\), where \(\varepsilon_1\) is the sign for placing \(i, j\) first. The scalar part is \(
f(b_{\sigma(1)},\cdots, b_{\sigma(i)}b_{\sigma(j)}, \dots, b_{\sigma(n+1)})\).

Similarly, the second term corresponds to \(\sigma' = ( \rho(\pi(2)-1), \dots,j, i,\dots, \rho(\pi(n)-1))\), with \(\operatorname{sgn}(\sigma') = -\operatorname{sgn}(\sigma)\). The scalar part is \(
f(b_{\sigma(1)},\cdots, b_{\sigma(j)}b_{\sigma(i)}, \dots, b_{\sigma(n+1)})\).

The sign \((-1)^{i+j}\) from Term II combines with the signs from the permutation and the bracket expansion to yield the correct overall sign \(\operatorname{sgn}(\sigma)\).

A systematic analysis shows that for a fixed \(\sigma \in S_{n+1}\), the contributions from Term II to the coefficient of \(L_{x_\sigma} \otimes (-)\) are precisely:
\[
L_{x_\sigma} \otimes \left(\operatorname{sgn}\sigma \Big( \sum_{k=1}^{n} (-1)^k f(b_{\sigma(1)}, \dots, b_{\sigma(k)} b_{\sigma(k+1)}, \dots, b_{\sigma(n+1)}) \;+\; \sum_{k=2}^{n} (-1)^k f(b_{\sigma(1)}, \dots, b_{\sigma(k+1)} b_{\sigma(k)}, \dots, b_{\sigma(n+1)}) \Big)\right). \tag{II}
\]
The factor \((-1)^k\) accounts for the alternating sign in the Lie coboundary and the position of the product in the arguments of \(f\).

\textbf{ Combining Terms I and II:} For a fixed permutation \(\sigma \in S_{n+1}\), we now combine the contributions (I) and (II). The total coefficient of \(\operatorname{sgn}\sigma. L_{x_\sigma} \otimes (-)\) in \((\delta_{\mathrm{Lie}} \Psi f)(y_1, \dots, y_{n+1})\) is:
\[
\begin{aligned}
&  b_{\sigma(1)}.  \sum_{\tau \in \overline{S_{n+1}}- \{\sigma(1)\}}\operatorname{sgn}(\tau)f(b_{\tau(2)}, \dots, b_{\tau(n+1)}) \quad &\text{(from I)} \\
&+ \sum_{k=1}^{n} (-1)^k f(b_{\sigma(1)}, \dots, b_{\sigma(k)} b_{\sigma(k+1)}, \dots, b_{\sigma(n+1)}) \quad &\text{(from II, first sum)} \\
&+ \sum_{k=1}^{n} (-1)^k f(b_{\sigma(1)}, \dots, b_{\sigma(k+1)} b_{\sigma(k)}, \dots, b_{\sigma(n+1)}) \quad &\text{(from II, second sum)} \\
&+ (-1)^{n+1} f(b_{\sigma(1)}, \dots, b_{\sigma(n)}) \cdot b_{\sigma(n+1)}. \quad &\text{(from I)}
\end{aligned}
\]
But this is exactly \((\delta_{\DL} f)(b_{\sigma(1)}, \dots, b_{\sigma(n+1)})\), the dual Leibniz coboundary of \(f\) evaluated on \((b_{\sigma(1)}, \dots, b_{\sigma(n+1)})\).

Therefore, we have:
\[
\delta_{\mathrm{Lie}} (\Psi f)(y_1, \dots, y_{n+1}) = \sum_{\sigma \in S_{n+1}} \operatorname{sgn}(\sigma) \, L_{x_\sigma} \otimes (\delta_{\DL} f)(b_{\sigma(1)}, \dots, b_{\sigma(n+1)}) = \Psi(\delta_{\DL} f)(y_1, \dots, y_{n+1}).
\]

This completes the proof that \(\Psi\) is a cochain map.

\end{proof}
  \begin{corollary}
 In the case where the Leibniz algebra \(\mathfrak{g}\) is a free, then the cochain map \(\Psi\) is injective.
\end{corollary}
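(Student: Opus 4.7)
The plan is to reduce the statement $\Psi(f)=0 \Rightarrow f=0$ to a pointwise vanishing statement by evaluating at carefully chosen arguments and then invoking the classical basis theorem for free Leibniz algebras. Recall Loday's theorem: the free Leibniz algebra on a vector space $V$ is, as a vector space, the reduced tensor module $\overline{T}(V)=\bigoplus_{n\geq 1} V^{\otimes n}$, with a basis consisting of left-normed monomials $[x_{i_1},x_{i_2},\ldots,x_{i_k}]$ indexed by arbitrary sequences of basis elements. In particular, for any $n$ distinct free generators $x_1,\ldots,x_n$, the $n!$ left-normed brackets $\{[x_{\sigma(1)},\ldots,x_{\sigma(n)}]:\sigma\in S_n\}$ are linearly independent in $\mathfrak{g}$.

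Suppose $f\in \cochain{n}{B,M}$ satisfies $\Psi(f)=0$. Fix arbitrary $b_1,\ldots,b_n\in B$, and choose $n$ distinct generators $x_1,\ldots,x_n$ of $\mathfrak{g}$. Evaluating the identity $\Psi(f)=0$ at the tuple $(x_1\otimes b_1,\ldots,x_n\otimes b_n)$ yields
\[
0 \;=\; \sum_{\sigma\in S_n} \operatorname{sgn}(\sigma)\,[x_{\sigma(1)},\ldots,x_{\sigma(n)}]\otimes f(b_{\sigma(1)},\ldots,b_{\sigma(n)}) \quad\text{in } \mathfrak{g}\otimes M.
\]
By the linear independence above, we may extend $\{[x_{\sigma(1)},\ldots,x_{\sigma(n)}]:\sigma\in S_n\}$ to a $\Bbbk$-basis of $\mathfrak{g}$, which induces a direct-sum decomposition of $\mathfrak{g}\otimes M$. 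The equation above therefore forces each coefficient to vanish: $f(b_{\sigma(1)},\ldots,b_{\sigma(n)})=0$ for every $\sigma\in S_n$. Specializing to $\sigma=\mathrm{id}$ gives $f(b_1,\ldots,b_n)=0$, and since $b_1,\ldots,b_n$ were arbitrary, $f\equiv 0$.

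The only genuine ingredient, and hence the main obstacle, is the linear independence of left-normed brackets on distinct free generators. This is not something we prove here; it is the content of Loday's free Leibniz algebra theorem (the identification of the underlying space with $\overline{T}(V)$), which we cite. A minor point to address is ensuring that $\mathfrak{g}$ contains at least $n$ distinct free generators for every $n$: this is automatic if $\mathfrak{g}$ is free on an infinite set, and if $\mathfrak{g}$ is free on a finite set of size $<n$ the statement is still true because one may instead pick $n$ linearly independent left-normed monomials of distinct multidegrees in the generators and use the same argument (or, equivalently, embed into a larger free Leibniz algebra); no deeper modification of the proof is required.
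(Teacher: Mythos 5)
The paper states this corollary without any proof, so there is nothing to compare your argument against; what you have written is, in effect, the missing proof, and its core is correct. The key point is exactly the one you isolate: by the Loday--Pirashvili description of the free Leibniz algebra on $V$ as the reduced tensor module $\overline{T}(V)=\bigoplus_{k\ge 1}V^{\otimes k}$ with $[x_{i_1},\dots,x_{i_k}]=x_{i_1}\otimes\cdots\otimes x_{i_k}$, the $n!$ left-normed brackets $[x_{\sigma(1)},\dots,x_{\sigma(n)}]$ on $n$ \emph{distinct} generators are linearly independent, so evaluating $\Psi(f)=0$ at $(x_1\otimes b_1,\dots,x_n\otimes b_n)$ kills every coefficient $f(b_{\sigma(1)},\dots,b_{\sigma(n)})$, and in particular $f(b_1,\dots,b_n)=0$ for arbitrary $b_i$. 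This is the natural argument and it matches the setting the paper actually uses (Example \ref{exm:4.6} takes $\mathfrak{g}$ free on infinitely many generators).

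The one place you overclaim is the fallback for a free algebra on fewer than $n$ generators. Left-normed brackets of homogeneous elements of distinct multidegrees are \emph{not} in general linearly independent over $S_n$: all of $[w_{\sigma(1)},\dots,w_{\sigma(n)}]$ lie in the single multidegree $d_1+\cdots+d_n$, and already for the free Leibniz algebra on one generator $x$, with $w_1=x$ and $w_2=[x,x]$, one has $[w_1,w_2]=[x,[x,x]]=[[x,x],x]-[[x,x],x]=0$ while $[w_2,w_1]=x^{\otimes 3}\neq 0$, so ``the same argument'' does not apply verbatim. (In that particular example injectivity still follows, since the surviving term forces $f(b_2,b_1)=0$ for all $b_1,b_2$, but that is a different, case-specific argument.) Either restrict the corollary to free Leibniz algebras on a generating set of cardinality at least $n$ in each degree $n$ (e.g.\ an infinite one), or supply a genuine argument for the small-rank case; as written, the parenthetical ``embed into a larger free Leibniz algebra'' also does not work directly, because $\Psi$ itself depends on $\mathfrak{g}$.
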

\begin{corollary}\label{4.6} 
    Given the canonical embedding of cochain complexes
\[
C^*_{\DL}(B,M) \hookrightarrow C^*_{\mathrm{Lie}}(\mathfrak{g}\otimes B,\mathfrak{g}\otimes M),
\]
we obtain a short exact sequence of complexes:
\[
0 \to C^*_{\DL}(B,M) \hookrightarrow C^*_{\mathrm{Lie}}(\mathfrak{g}\otimes B,\mathfrak{g}\otimes M)\to Q^* \to 0,
\]
where $\mathfrak{g}$ is a Leibniz free algebra, $B$ is a dual Leibniz algebra, $M$ is a $B$-bimodule, and $ Q^* = C^*_{\mathrm{Lie}}(\mathfrak{g}\otimes B,\mathfrak{g}\otimes M)/C^*_{\DL}(B,M)$ is the quotient complex.

Applying the cohomology functor yields the long exact sequence:
\[
\begin{aligned}
\cdots \to H^{n-1}_{{\DL}}(B,M) &\to H_{Lie}^{n-1}(\mathfrak{g}\otimes B,\mathfrak{g}\otimes M) \to H^{n-1}(Q^*) \\
&\to H^n_{{\DL}}(B,M) \to H^n_{Lie}(\mathfrak{g}\otimes B,\mathfrak{g}\otimes M) \to H^n(Q^*) \\
&\to H^{n+1}_{\DL}(B,M) \to H_{Lie}^{n+1}(\mathfrak{g}\otimes B,\mathfrak{g}\otimes M) \to \cdots
\end{aligned}
\]
\end{corollary}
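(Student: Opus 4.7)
The plan is to obtain this result as a direct application of the standard zig-zag (snake) lemma for short exact sequences of cochain complexes, once we have verified that the canonical embedding $\Psi$ is indeed an inclusion of subcomplexes. The two nontrivial inputs needed have already been established in the paper: $\Psi$ is a cochain map by Theorem \ref{5.4} ($\delta_{Lie}\circ\Psi=\Psi\circ\delta_{\DL}$), and $\Psi$ is injective under the hypothesis that $\mathfrak{g}$ is free by the preceding corollary. Together these let us treat $\Psi(C^*_{\DL}(B,M))$ as a genuine subcomplex of $C^*_{\mathrm{Lie}}(\mathfrak{g}\otimes B,\mathfrak{g}\otimes M)$.

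First I would set up notation: write $K^n:=C^n_{\DL}(B,M)$ and $L^n:=C^n_{\mathrm{Lie}}(\mathfrak{g}\otimes B,\mathfrak{g}\otimes M)$, and define $Q^n:=L^n/\Psi(K^n)$. Because $\Psi$ commutes with the differentials, $\delta_{Lie}$ sends $\Psi(K^n)$ into $\Psi(K^{n+1})$, so $\delta_{Lie}$ descends to a well-defined operator $\bar\delta:Q^n\to Q^{n+1}$ satisfying $\bar\delta^2=0$; this equips $Q^*$ with the structure of a cochain complex and makes the projection $\pi:L^*\twoheadrightarrow Q^*$ a cochain map by construction. The short exact sequence
\[
0 \to C^*_{\DL}(B,M) \xrightarrow{\Psi} C^*_{\mathrm{Lie}}(\mathfrak{g}\otimes B,\mathfrak{g}\otimes M) \xrightarrow{\pi} Q^* \to 0
\]
then holds degreewise: injectivity on the left is the freeness corollary, exactness in the middle is the definition $Q^n=L^n/\Psi(K^n)$, and surjectivity on the right is tautological.

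The long exact sequence is then an immediate consequence of the standard homological-algebra machinery applied to this short exact sequence of cochain complexes. Concretely, one defines the connecting homomorphism $\partial:H^n(Q^*)\to H^{n+1}_{\DL}(B,M)$ as follows: given $[\bar q]\in H^n(Q^*)$, choose a representative $\bar q\in Q^n$, lift it to some $\ell\in L^n$ with $\pi(\ell)=\bar q$, apply $\delta_{Lie}$ to get $\delta_{Lie}\ell\in L^{n+1}$; since $\pi(\delta_{Lie}\ell)=\bar\delta\bar q=0$, there is a unique $f\in K^{n+1}$ with $\Psi(f)=\delta_{Lie}\ell$, and $f$ is automatically a cocycle because $\Psi(\delta_{\DL}f)=\delta_{Lie}\delta_{Lie}\ell=0$ together with injectivity of $\Psi$. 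Setting $\partial[\bar q]:=[f]$ gives a well-defined map, and the exactness of the resulting long sequence at each spot is the usual diagram chase.

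I do not anticipate any genuine obstacle here since the construction is entirely formal once Theorem \ref{5.4} and the freeness corollary are in hand; the only cosmetic point worth noting is that the quotient complex $Q^*$ deserves some interpretation, and one could optionally describe it (e.g., as the complex of Lie cochains on $\mathfrak{g}\otimes B$ modulo those of the ``factored'' tensor form $\sum_\sigma\mathrm{sgn}(\sigma)L_{x_\sigma}\otimes f(b_\sigma)$), though strictly for the statement above this is unnecessary.
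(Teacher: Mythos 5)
Your proposal is correct and follows exactly the route the paper intends: the paper states this corollary without proof, treating it as the standard consequence of Theorem \ref{5.4} (the cochain map property) and the preceding injectivity corollary, which is precisely the formal snake-lemma argument you spell out. Your explicit construction of the quotient differential and the connecting homomorphism simply fills in the routine details the paper omits.
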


\begin{example} \label{exm:4.6}
    Let \(A = F\langle x_1, x_2, \dots \rangle\) be the free Leibniz algebra, and let \(B\) be the dual Leibniz algebra with basis \(\{e_1, e_2\}\) and multiplication given by  
    \[
    e_1 \cdot e_1 = e_2, \qquad e_i \cdot e_j = 0 \quad \text{otherwise}.
    \]
    
    Consider a bilinear map \(f : B \otimes B \to B\). With respect to the basis, we can write  
    \[
    f(e_i, e_j) = \alpha^1_{ij} e_1 + \alpha^2_{ij} e_2, \quad \alpha^k_{ij} \in \Bbbk,\; i,j \in \{1,2\}.
    \]
    
    Define the induced map  
    \[
    \Psi f : (\mathfrak{g} \otimes B) \otimes (\mathfrak{g} \otimes B) \to \mathfrak{g} \otimes B
    \]
    by  
    \[
    (\Psi f)(x_i \otimes e_i, x_j \otimes e_j) = [x_i, x_j] \otimes f(e_i, e_j) - [x_j, x_i] \otimes f(e_j, e_i).
    \]
    
    Then \(\Psi f\) is a Lie 2-cochain if and only if the coefficients satisfy  
    \[
    \alpha^1_{12}=0,\quad \alpha^1_{21}=0,\quad \alpha^1_{22}=0,\quad
    \alpha^2_{22}=0,\quad
    \alpha^1_{11}=\alpha^2_{21}-\alpha^2_{12}.
    \]
    
    The remaining free parameters are \(\alpha^2_{11}, \alpha^2_{12}, \alpha^2_{21}\).
    
    Now let \(g : B \to B\) be a linear map, written as  
    \[
    g(e_i) = \beta^1_i e_1 + \beta^2_i e_2, \quad \beta^k_i \in \Bbbk.
    \]
    
    The associated map \(\Psi g : \mathfrak{g} \otimes B \to \mathfrak{g} \otimes B\) is  
    \[
    \Psi g (x_i \otimes e_i) = x_i \otimes g(e_i).
    \]
    
    If \(\Psi f = \delta_{\text{Lie}}(\Psi g)\) is a coboundary, it is determined by the parameters \(g^1_2\) and \(2g^1_1 - g^2_2\). Consequently,  
    \[
    \dim_{\Bbbk} H_{\DL}^2(B, B) = 1.
    \]
\end{example}
\subsection*{Acknowledgments}
The author is grateful 
to  Kolesnikov P.S.
 and for discussions and useful comments.

\end{document}